\numberwithin{equation}{section}
\newtheorem{thm}{Theorem}[section]
\newtheorem{defn}{Definition}[section]
\newtheorem{prop}[thm]{Proposition}
\newtheorem{cor}[thm]{Corollary}
\newtheorem{conj}{Conjecture}
\newtheorem{rem}{Remark}
\def\ni{\noindent}
\def\N{\mathbb{N}}
\def\J{\mathbb{J}}
\title{\textbf{\sc On the Vertex In-Degrees of Certain Jaco-Type Graphs}}
\author{Johan Kok}
\affil{\small Tshwane Metropolitan Police Department\\ City of Tshwane, South Africa \\ {\tt kokkiek2@tshwane.gov.za.}}
\author{N. K. Sudev}
\affil{\small Centre for Studies in Discrete Mathematics\\ Vidya Academy of Science \&   Technology \\ Thalakkottukara, Thrissur, India.\\ {\tt sudevnk@gmail.com}}
\author{K. P. Chithra}
\affil{\small Naduvath Mana, Nandikkara \\ Thrissur, India.\\ {\tt chithrasudev@gmail.com}}
\author{K. A. Germina}
\affil{\small Department of Mathematics,\\ University of Botswana \\ Gaborone, Botswana.\\ {\tt srgerminaka@gmail.com}}
\author{U. Mary}
\affil{\small Department of Mathematics\\ Nirmala College For Women\\  Coimbatore, India. \\{\tt marycbe@gmail.com}}
\date{}
\begin{document}
\maketitle
	
\vspace{-0.5cm}

\newpage

\begin{abstract}
\ni The concepts of linear Jaco graphs and Jaco-type graphs have been introduced as certain types of directed graphs with specifically defined adjacency conditions. The distinct difference between a pure Jaco graph and a Jaco-type graph is that for a pure Jaco graph, the total vertex degree $d(v)$ is well-defined, while for a Jaco-type graph the vertex out-degree $d^+(v)$ is well-defined. Hence, in the case of pure Jaco graphs a challenge is to determine $d^-(v)$ and $d^+(v)$ respectively and for Jaco-type graphs a challenge is to determine $d^-(v)$. In this paper, the vertex in-degrees for Fibonaccian and modular Jaco-type graphs are determined.
\end{abstract}
\ni \textbf{Keywords:} Jaco-type graph, Fibonaccian Jaco-type graph, modular Jaco-type graph, vertex in-degree.

\vspace{0.20cm}

\ni \textbf{Mathematics Subject Classification:} 05C07, 05C38, 05C75, 05C85.

\section{Introduction}

For general notation and concepts in graphs and digraphs see \cite{BM1,CL1,FH1,DBW}. Unless mentioned otherwise, all graphs in this paper are simple, connected and directed graphs (digraphs).
 
The concept of a special class of directed graphs, namely Jaco graphs, with a specific adjacency conditions was introduced in \cite{KFW1,KFW2}. The notion of Jaco graphs has been improved later and hence the notion of linear Jaco graphs, has been introduced as follows.

\begin{defn}{\rm 
\cite{KSS1} An infinite linear Jaco graph, denoted by $J_{\infty}(f(x))$, with $f(x) = mx + c,$ $x,m \in \N,$ $c \in \N_0$, is a directed graph with vertex set $\{v_i: i\in \N\}$ such that $(v_i,v_j)$ is an arc of $J_{\infty}(f(x))$ if and only if $f(i)+i-d^-(v_j)\ge j$.}
\end{defn}

A Jaco graph is considered to be a \textit{pure Jaco graph} if the vertex degree $d(v)$ is well-defined. The above mentioned studies are the main initial studies on the families of \textit{pure Jaco graphs}.  Further research followed on different classes of Jaco graphs in \cite{KSS1,JK1,KSC1} and a few more papers on different properties and characteristics of Jaco graphs followed subsequently. 

In \cite{KSC1}, it is reported that a linear Jaco graph $J_n(x)$ can be defined a the graphical embodiment of a specific sequence defining the vertex out-degree. This observation opened the scope for determining the graphical embodiment of countless other integer sequences and and studying their characteristics. 

These graphs (graphical embodiments) corresponding to different integer sequences, with well-defined vertex out-degrees are broadly named as \textit{Jaco-type graphs}. A general definition of a Jaco-type graph is as follows.

\begin{defn}\label{Defn-1.1}{\rm 
\cite{KSC1} For a non-negative, non-decreasing integer sequence $\{a_n\}$, an \textit{infinite Jaco-type graph}, denoted by $J_\infty(\{a_n\})$, is defined as a directed graph with vertex set $V(J_\infty(\{a_n\}))= \{v_i: i \in \N\}$ and the arc set $A(J_\infty(\{a_n\})) \subseteq \{(v_i, v_j): i, j \in \N, i< j\}$ such that $(v_i,v_ j) \in A(J_\infty(\{a_n\}))$ if and only if $a_i+ i \ge j$.
}\end{defn}

\begin{defn}\label{Defn-1.2}{\rm 
\cite{KSC1} For a non-negative, non-decreasing integer sequence $\{a_n\}$, the a \textit{finite Jaco-type Graph} denoted by $J_n(\{a_n\})$, is a finite subgraph of the infinite Jaco-type graph $J_\infty(\{a_n\});n\in \N$.
}\end{defn}

So far, the introductory research on Jaco-type graphs dealt with non-negative, non-decreasing integer sequences only. 

Note that a finite Jaco-type graph $J_n(\{a_n\})$ is obtained from $J_\infty(\{a_n\})$ by lobbing off all vertices $v_k$ (with incident arcs) for all $k>n$.

Note that, the total vertex degree $d(v)$ of each vertex $v$ of a pure Jaco graph is well-defined, while for a Jaco-type graph the vertex out-degree $d^+(v)$ is well-defined. Hence, the main challenge in the studies on a pure Jaco graph is to determine $d^-(v)$ and $d^+(v)$ separately where as the main problem in the studies on Jaco-type graphs is to determine $d^-(v)$. 

\section{Jaco-type Graph for the Fibonacci Sequence}

The definition of the infinite Jaco-type graph corresponding to the Fibonacci sequence, which is also called the \textit{Fibonaccian Jaco-type graph}, can be derived from Definition \ref{Defn-1.1}. We have the graph $J_\infty(s_1)$, defined by $V(J_\infty(s_1))=\{v_i: i \in \N\}, \ A(J_\infty(s_1)) \subseteq \{(v_i, v_j): i, j \in \N, i< j\}$ and $(v_i,v_ j) \in A(J_\infty(s_1))$ if and only if $f_i+i\ge j$.

\ni Figure \ref{fig: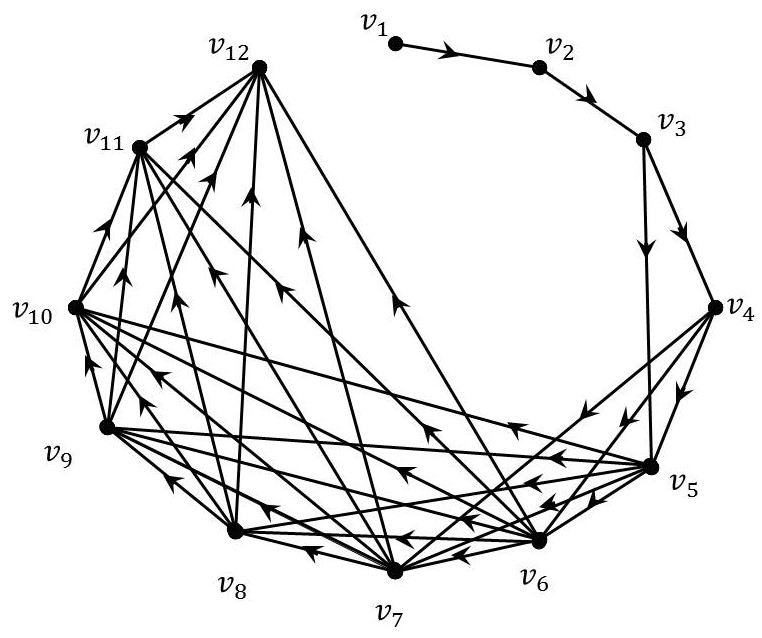} depicts $J_{12}(s_1)$. 

\begin{figure}[h!]
\centering
\includegraphics[width=0.7\linewidth]{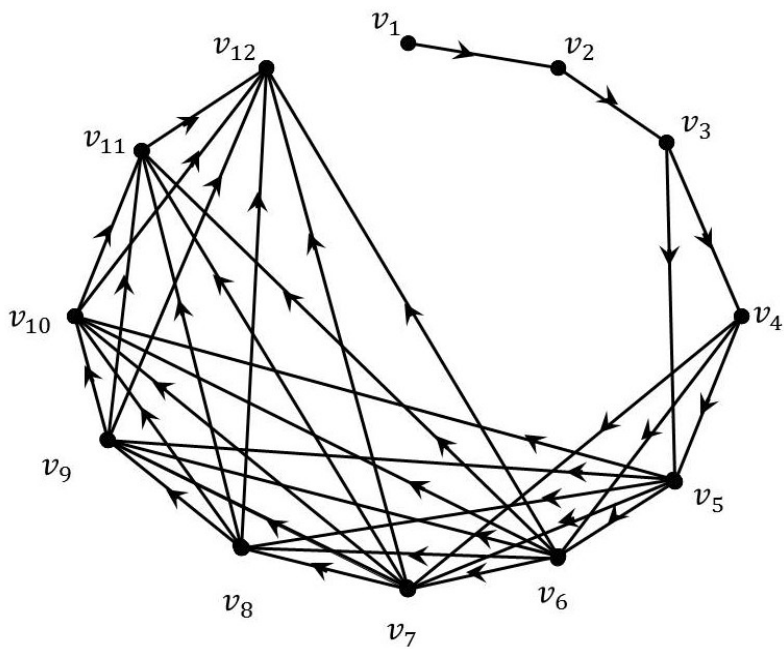}
\caption{$J_{12}(s_1)$.}
\label{fig:Fig-1.jpg}
\end{figure}

Table \ref{Tab-1} depicts the manually calculated invariant, $d^-(v_i)$, $1\le i \le 30$ together with the suggested pattern for $i \ge 6$ which requires proof to settle the determination of the corresponding in-degrees, $d^-(v_i)$, $ i = 3,4,5,\ldots$

\begin{longtable}{|l|l|l|c|l|l|l|}
\hline
$\phi (v_i)\rightarrow i\in{\N}$ & $d^-(v_i)$ & $d^-(v_i)$ & & $\phi (v_i)\rightarrow i\in{\N}$ & $d^-(v_i)$ & $d^-(v_i)$\\
\hline
1 & 0 & - & & 16 & 9 & $f_6+1$\\
\hline
2 & 1 & - & & 17 & 10 & $f_6+2$\\
\hline
3 & 1 & - & & 18 & 11 & $f_6+3$\\
\hline
4 & 1 & - & & 19 & 12 & $f_7-1$\\
\hline
5 & 2 & - & & 20 & 13 & $f_7$\\
\hline
6 & 2 & $f_4-1$ & & 21 & 13 & $f_7$\\
\hline
7 & 3 & $f_4$ & & 22 & 14 & $f_7+1$\\
\hline
8 & 3 & $f_4$ & & 23 & 15 & $f_7+2$\\
\hline
9 & 4 & $f_5-1$ & & 24 & 16 & $f_7+3$\\
\hline
10 & 5 & $f_5$ & & 25 & 17 & $f_7+4$\\
\hline
11 & 5 & $f_5$ & & 26 & 18 & $f_7+5$\\
\hline
12 & 6 & $f_5+1$ & & 27 & 19 & $f_7+6$\\
\hline
13 & 7 & $f_6-1$ & & 28 & 20 & $f_8-1$\\
\hline
14 & 8 & $f_6$ & & 29 & 21 & $f_8$\\
\hline
15 & 8 & $f_6$ & & 30 & 21 & $f_8$\\
\hline
\caption{}\label{Tab-1}
\end{longtable}

We observe that for $i\ge 6$ the subsequences of in-degrees are seemingly of the form: $\{\ldots,f_k-1$, $f_k$, $f_k$, $f_k+1$, $f_k+2$, $f_k+3$,\ldots, $f_k +(f_{k+1}-2),\ldots\}$, $k=4,5,6,\ldots$. 

\vspace{0.2cm}

The following theorem is of importance to prove the aforesaid observation and other results related to both pure Jaco graphs and Jaco-type graphs.

\begin{thm}\label{Thm-2.1}
For any non-negative, stepwise non-decreasing and stepwise increasing integer sequence $\{a_n\}$, and any $\ell \in \N$ there exists at least one vertex $v_i, \ i \in \N$ in the corresponding Jaco-type graph $J_\infty(\{a_n\})$ such that $d^-(v_i) = \ell$.
\end{thm}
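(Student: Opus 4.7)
The plan is to show that the integer sequence $d^-(v_1), d^-(v_2), \ldots$ starts at $0$, changes by $0$ or $+1$ at each step, and tends to infinity; a discrete intermediate-value argument then forces every $\ell \in \N$ to be attained.

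I would begin with the key monotonicity observation: because $\{a_n\}$ is non-decreasing, the auxiliary sequence $a_i + i$ is strictly increasing in $i$, since $(a_{i+1} + (i+1)) - (a_i + i) = (a_{i+1} - a_i) + 1 \ge 1$. By Definition~\ref{Defn-1.1} the in-neighbours of $v_j$ form the set $\{i < j : a_i + i \ge j\}$, which by strict monotonicity is either empty or a tail $\{i_1(j), i_1(j)+1, \ldots, j-1\}$ of $\{1, \ldots, j-1\}$, where $i_1(j) := \min\{i : a_i + i \ge j\}$. In particular $|\{i < j : a_i + i = j\}| \le 1$, because $a_i+i$ hits any prescribed value at most once.

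Comparing consecutive vertices I would write
\[
d^-(v_{j+1}) - d^-(v_j) = [a_j \ge 1] - |\{i < j : a_i + i = j\}| \in \{-1, 0, +1\},
\]
where $[\cdot]$ denotes the Iverson bracket. The step I expect to be the main obstacle is ruling out the $-1$ case, which is what makes the discrete intermediate-value argument work: a $-1$ increment would require $a_j = 0$ together with some $i < j$ satisfying $a_i + i = j$, but $a_j = 0$ combined with non-decreasingness forces $a_i = 0$ for all $i \le j$, so $a_i + i = j$ with $i < j$ becomes impossible. Hence every increment lies in $\{0, +1\}$ and the sequence $d^-(v_j)$ is non-decreasing in $j$.

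For unboundedness, given $\ell \in \N$ I would invoke the ``stepwise increasing'' hypothesis to pick $i_0$ with $a_{i_0} \ge \ell$ and take $j := i_0 + \ell$; the non-decreasing property then gives $a_i + i \ge a_{i_0} + i_0 \ge j$ for every $i \in \{i_0, i_0+1, \ldots, j-1\}$, exhibiting $\ell$ distinct in-neighbours of $v_j$ and hence $d^-(v_j) \ge \ell$. To finish, since $d^-(v_1) = 0$, increments are in $\{0, +1\}$, and the sequence is unbounded, selecting the smallest $j^*$ with $d^-(v_{j^*}) \ge \ell$ yields $d^-(v_{j^*}) = \ell$ exactly, because the preceding term is strictly less than $\ell$ while the jump is at most $1$.
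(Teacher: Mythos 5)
Your proof is correct, and it takes a genuinely different route from the paper. The paper argues by contradiction: it supposes $\ell$ is never attained, picks the largest $j^*$ with $d^-(v_{j^*})=\ell-1$, assumes $d^-(v_{j^*+1})=\ell+1$, and shows that the extra in-arc $(v_{j^*-d^-(v_{j^*})-1},v_{j^*+1})$ needed for that jump would force $d^-(v_{j^*})=\ell$ as well --- i.e.\ the same core fact you isolate, that consecutive in-degrees cannot jump by $2$, because in-neighbourhoods are consecutive tails $\{i_1(j),\ldots,j-1\}$. Your version is cleaner and more complete in two respects. First, you compute the increment exactly as $d^-(v_{j+1})-d^-(v_j)=[a_j\ge 1]-|\{i<j: a_i+i=j\}|\in\{-1,0,+1\}$ and explicitly rule out $-1$, which gives monotonicity of $d^-$ for free; the paper only excludes upward jumps of size $\ge 2$ and relegates larger gaps to ``a similar argument.'' Second, and more importantly, the paper never shows that any vertex attains in-degree $\ge\ell$ at all --- it silently assumes some vertex has in-degree $\ell-1$ ``without loss of generality'' --- whereas your unboundedness step (choosing $i_0$ with $a_{i_0}\ge\ell$ and $j=i_0+\ell$) is exactly where the ``stepwise increasing'' hypothesis must be used, since for a bounded sequence such as $a_n\equiv 1$ the conclusion is false. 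So your discrete intermediate-value argument is not just an alternative; it closes a genuine gap in the published proof.
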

\begin{proof}
Consider a non-negative, step-wise non-decreasing and step-wise increasing integer sequence $\{a_n\}$, and assume for some $\ell \in \N, \ d^-(v_i) \ne \ell$, $\forall i \in \N$. Assume without loss of generality that there exists at least one vertex $v_j$ with $d^-(v_j) = \ell-1$, then select $j^* =\max\{j\}$ for which it holds. 

Further assume without loss of generality that $d^-(v_{j^*+1}) = \ell + 1$. Now, for vertex $v_{j^*}$, clearly the lowest subscripted tail vertex of an incident arc is  $v_{j^* -d^-(v_{j^*})}$. With regard to the arcs incident with the vertex $v_{j^* + 1}$, at least all among the arcs  $(v_{j^* -d^-(v_{j^*})},\, v_{j^*+1}),( v_{j^* -d^-(v_{j^*})+1},\,v_{j^*+1}),( v_{j^* -d^-(v_{j^*})+2},\,v_{j^*+1}),\ldots, (v_{j^*},\,v_{j^*+1})$ exist. However, we have $d^-(v_{j^*+1}) = \ell$. Hence, an additional arc, $(v_{j^* -d^-(v_{j^*})-1},v_{j^*+1})$ is required to ensure that $d^-(v_{j^*+1}) = \ell+1$. By Definition \ref{Defn-1.1}, we have a contradiction in that, $d^-(v_{j^*}) = \ell \ne \ell-1$. 

By similar argument leading to contradiction, we can establish that it is not possible to find $d^-(v_{j^*}) = \ell-m$, $d^-(v_{j^*+1}) = \ell +t,\ m,t>1$.

Hence, for all $\ell \in \N$, there exists at least one vertex $v_i, \, i \in \N$ in the corresponding Jaco-type graph $J_\infty(\{a_n\})$ such that $d^-(v_i) = \ell$.
\end{proof}

Before, going to the next theorem, we note some interesting properties of the Fibonacci sequence. Consider the following table of first few elements of the Fibonacci sequence. 

\vspace{0.25cm}
\begin{center}
\begin{tabular}{|c|c|c|c|c|c|c|c|c|c|c|c|c|c|c|c|c|}
\hline
$n$ & 0 & 1 & 2 & 3 & 4 & 5 & 6 & 7 & 8 & 9 & 10 & 11 & 12 & 13 & 14 & 15 \\
\hline
$f_n$ & 0 & 1 & 1 & 2 & 3 & 5 & 8 & 13 & 21 & 34 & 55 & 89 & 144 & 233 & 377 & 610  \\
\hline
\end{tabular}

\vspace{0.35cm}

\begin{tabular}{|c|c|c|c|c|c|c|c|c|c|c|}
\hline
$n$ & 16 & 17 & 18 & 19 & 20 & 21 & 22 & 23 & 24 & 25 \\
\hline
$f_n$ & 987 & 1597 & 2584 & 4181 & 6765 & 10946 & 17711 & 28657 & 46368 & 75025 \\
\hline
\end{tabular}
\end{center}

\vspace{0.25cm}

From the above table, we observe the following properties of the Fibonacci sequence.
\begin{enumerate}\itemsep0mm 
\item[(i)] Look at the number $f_3 = 2$. Every $3$-rd number is a multiple of $2$ ($2, 8, 34, 144,\\ 610, \ldots $),
\item[(ii)]  Look at the number $f_4 = 3$. Every $4$-th number is a multiple of $3$ ($3, 21, 144,\\987, \ldots $), 
\item[(iii)]  Look at the number $f_5 = 5$. Every $5$-th number is a multiple of $5$ ($5, 55, 610, \\ 6765,\ldots $). 
\item[(iv)] Proceeding like this, we can see that every $n$-th number is a multiple of $f_n$.

\item[(v)]  Any Fibonacci number that is a prime number must also have a subscript that is a prime number.
\end{enumerate}

It is to be noted that the converse of (v) is true. That is, it is not true that if a subscript is prime, then so is that Fibonacci number. The first case to show this is the $19$-th position (and $19$ is prime) but $f_{19}=4181$ and $f_{19}$ is not prime because $4181=113 \times 37$.

\vspace{0.3cm}

Invoking the above properties, the following theorem discusses the subsequences of indegrees in an infinite Fibonaccian Jaco-type graph.

\begin{thm}\label{Thm-2.2}
For the infinite Fibonaccian Jaco-type graph $J_\infty(s_1)$, the subsequences of indegrees for vertices $v_i$, are:
\begin{enumerate}\itemsep0mm
\item[(i)] $d^-(v_{3i}) = d^-(v_{3(i-1)}+f_3$, for all $i \ge 3,$ with the initial value $d^-(v_6)= 2= f_3$,
\item[(ii)]  $d^-(v_{4i}) = d^-(v_{4(i-1)}+f_4$, for all $i \ge 4,$ with the initial value $d^-(v_8)= 3= f_4$
\item[(iii)]  $d^-(v_{5i}) = d^-(v_{5(i-1)}+f_5$, for all $i \ge 5,$ and $5$ is the least divisor of $j$, with the initial value $d^-(v_5)= 5= f_5$\\ and 
\item[(iv)]  $d^-(v_j) =  d^-(v_{4m} ) \pm 1$ or $d^-(v_j) =  d^-(v_{4m}) \pm 3$.
\end{enumerate}
\end{thm}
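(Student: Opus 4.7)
The plan is to first convert the adjacency condition into a workable closed-form expression for $d^-(v_j)$, and then to analyze how that expression behaves along arithmetic subsequences of the index $j$.

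\textbf{Reformulation.} Observe that the map $i \mapsto f_i+i$ is strictly increasing on $\N$, so the set $\{i<j : f_i+i\ge j\}$ is always a terminal segment $\{i_0(j), i_0(j)+1,\ldots, j-1\}$, where $i_0(j):=\min\{i\in\N : f_i+i\ge j\}$. Hence
\[
d^-(v_j)\;=\;j-i_0(j).
\]
This replaces the combinatorial counting problem with the study of the step function $i_0(j)$, whose jumps and plateau lengths are determined entirely by the sequence $(f_k+k)_{k\ge 1}$. Specifically, $i_0(j)=k$ exactly when $f_{k-1}+(k-1)<j\le f_k+k$, so the level set of $i_0$ at height $k$ has length $f_k-f_{k-1}+1=f_{k-2}+1$, using $f_k=f_{k-1}+f_{k-2}$. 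At the transition, $d^-(v_{f_k+k})=f_k=d^-(v_{f_k+k+1})$, which accounts for the ``doubled'' entry in the observed pattern $\{\ldots,f_k-1,f_k,f_k,f_k+1,\ldots\}$.

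\textbf{Strategy for (i)--(iii).} For $m\in\{3,4,5\}$ and the claimed constants $f_3=2$, $f_4=3$, $f_5=5$, rewrite the recurrence as
\[
d^-(v_{mi})-d^-(v_{m(i-1)})\;=\;m-\bigl(i_0(mi)-i_0(m(i-1))\bigr).
\]
So each claim becomes a statement about the number of plateau-boundaries of $i_0$ lying in the half-open interval $(m(i-1),\,mi]$: namely, that this count equals $m-f_m$ for the appropriate range of $i$. I would prove this by induction on $i$, using the Fibonacci identity $f_{k+2}=f_{k+1}+f_k$ together with the divisibility facts listed just before the theorem (every $n$-th Fibonacci number is a multiple of $f_n$) to show that the spacing of the jumps in $i_0$ is eventually commensurable with $m$ in precisely the required way. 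The base cases are supplied by Table~1, and Theorem~\ref{Thm-2.1} ensures that no in-degree value is skipped, so every plateau boundary is actually witnessed by a vertex.

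\textbf{Strategy for (iv) and the main obstacle.} Once (ii) is in hand, an arbitrary index $j$ lies within distance $3$ of some multiple $4m$, and the Lipschitz bound $|d^-(v_{j+1})-d^-(v_j)|\le 1$ (which again follows from Theorem~\ref{Thm-2.1} together with the monotonicity of $i_0$) forces $d^-(v_j)-d^-(v_{4m})\in\{-3,-2,-1,0,1,2,3\}$. Refining this to the claimed set $\{\pm 1,\pm 3\}$ requires a parity/congruence argument: I would count precisely how many $i_0$-jumps occur between $4m$ and $j$, and use the fact that consecutive plateau lengths are $f_{k-2}+1$, so their parities alternate in a controlled way. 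The main obstacle will be handling the irregularities where the interval $(m(i-1),mi]$ happens to contain either zero or two jumps of $i_0$ instead of the generic one; these exceptions correspond exactly to the indices $i$ where $mi$ lies close to some $f_k+k$, and pinning them down will require a careful Fibonacci-modular analysis rather than a clean inductive step.
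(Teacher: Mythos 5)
Your opening reformulation is correct and is genuinely different from (and sounder than) what the paper does: since $i\mapsto f_i+i$ is strictly increasing, $d^-(v_j)=j-i_0(j)$ with $i_0(j)=\min\{i: f_i+i\ge j\}$, the plateau of $i_0$ at height $k$ is $f_{k-1}+k-1<j\le f_k+k$, and this reproduces exactly the pattern $\{\ldots,f_k-1,f_k,f_k,f_k+1,\ldots\}$ observed before the theorem (the doubled value $f_k$ occurs at $j=f_k+k$ and $j=f_k+k+1$). The paper instead labels $v_i$ by $f_i$, invokes the divisibility property $f_n\mid f_{kn}$, and checks a handful of small indices before asserting the general recurrences; your step-function analysis is the more rigorous route.

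The gap is that everything after the reformulation is deferred to an induction that cannot be carried out, because your own closed form refutes claims (i)--(iv) beyond the first few indices. As you note, $d^-(v_{mi})-d^-(v_{m(i-1)})=m-\bigl(i_0(mi)-i_0(m(i-1))\bigr)$, so (i) requires exactly one jump of $i_0$ in every interval $(3(i-1),3i]$ for $i\ge 3$. But consecutive jump points $f_k+k+1$ are spaced $f_{k-1}+1$ apart, which tends to infinity, so for all but a vanishing proportion of $i$ the interval contains no jump and the increment is $3$, not $f_3=2$. The first failure is already in Table~\ref{Tab-1}: $d^-(v_{18})=11\ne d^-(v_{15})+2=10$. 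Likewise $d^-(v_{20})=13\ne d^-(v_{16})+3=12$ refutes (ii) (the paper's proof asserts $d^-(v_{20})=12$, contradicting its own table), and your Lipschitz observation ($d^-$ is non-decreasing with steps in $\{0,1\}$) shows $d^-(v_7)-d^-(v_4)=2$, which is excluded by (iv). So the ``main obstacle'' you flag at the end is not a technicality to be handled by a finer Fibonacci-modular analysis; it is the generic case, and no argument (including the paper's) can close the induction. The constructive outcome of your approach is the correct replacement statement, namely $d^-(v_j)=j-\min\{k: f_k+k\ge j\}$, which is what actually governs Table~\ref{Tab-1}.
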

\begin{proof}
For the infinite Fibonaccian Jaco-type graph $J_{\infty}(s_1)$, we would like to determine the  in-degrees for vertices $v_i,\ i \ge 1$. First of all, note that any Jaco-type graph admit a unique linear ordering of the vertices with respect to its definition of the arcs. Let the vertices of the Jaco-type graph be linearly ordered as $v_1,\ 1 \le i \le n$. Label on the vertices of the Jaco-type graph with the numbers from the Fibonacci sequence in the order of which the vertices are linearly ordered. That is, label the vertex $v_i$ with $f_i$, the $i$-th number from the Fibonacci sequence $\{f_1, f_2, f_3 \ldots\}$, where  the $j$-th vertex $v_j$ is labelled as $f_j = f_{j-1}+j_{j-2}$. Let $v_i$ be the minimum subscripted  vertex for $v_j$ such that $i+f_i = j$, where $f_i$ the corresponding labelling of $v_i$. That is, at the $j$-th vertex $i+f_i $ attain the maximum. In this case, clearly $d^-(v_j)= f_i$. Hence, determination of the minimum subscripted vertex say $v_i$ is important.

But, there are vertices $v_j$, where there exists no such minimum subscripted vertex so as to compute the in-degree. Also, note that for all the vertices $\{v_l\}$ between $v_i$ and $v_j$, which do not have such minimum subscripted vertex $v_k$ for which $d^-(v_l)= f_k$, we have $d^-(v_l)= l-k$.

We have every $v_{3i}, i \ge 1$ is a multiple of $2$; every $v_{4i}, i \ge 1 $ is a multiple of $3$; every $v_{5i}, i \ge 1 $ is a multiple of $5$; every $v_{6i}, i \ge 1 $ is a multiple of $8$. 

\textit{Case 1:} Consider the  pairs of vertices $(v_i, v_j)$ such that  $v_{j}, j \ge 3$ is a multiple of $3$. Also note that $f_3= 2$, and $d^-(v_3)=1$. Then, $d^-(v_6)= 2 = f_3$, $d^-(v_9)= 4= d^-(v_6) + 2$, $d^-(v_{12})= 6= d^-(v_6)+2$,$d^-(v_{15})= 8= d^-(v_9)+2$,$\dots$.  In general, $d^-(v_{3i}) = d^-(v_{3(i-1)}+2 (=f_3)$, for all $i \ge 3,$ with the initial value $d^-(v_6)= 2= f_3$. Hence,  $d^-(v_{3i}) = d^-(v_{3(i-1)}+f_3$, for all $i \ge 3,$ with the initial value $d^-(v_6)= 2= f_3$.

\textit{Case 2:} Consider the  pairs of vertices $(v_i, v_j)$ such that  $v_{j}, j \ge 4 $ is a multiple of $4$. Also note that $f_4= 3$, and $d^-(v_4)=1$. Then, $d^-(v_8)= 3 = f_4$, $d^-(v_{12})= 6= d^-(v_4) + 3$, $d^-(v_{16})= 9= d^-(v_{12})+3$,$d^-(v_{20})= 12= d^-(v_{16})+3, \dots$. In general, $d^-(v_{4i}) = d^-(v_{4(i-1)}+f_4$, for all $i \ge 4$ with the initial value $d^-(v_8)=3=f_4$.

\textit{Case 3:} Consider the  pairs of vertices $(v_i, v_j)$ such that  $v_{j}, j \ge 5 $ is a multiple of $5$, (but not divisible by $3$ and $4$) and $5$ is the last divisor of $j$. Also note that $f_5= 5$, and $d^-(v_5)=2$. Then, 

\begin{alignat*}{3}
d^-(v_{10}) & = & 5 & =  f_5,\\
d^-(v_{15}) & = & 8 & =  d^-(v_4) + 3,\\ 
d^-(v_{20}) & = & 12 & =  d^-(v_{12})+3,\\  
d^-(v_{25}) & = & 17 & =  d^-(v_{20})+5,\\
\ldots & &\ldots & \ldots\ldots\dots\\
\ldots & &\ldots & \ldots\ldots\dots.
\end{alignat*}
In general, $d^-(v_{5i}) = d^-(v_{5(i-1)}+f_5$, for all $i \ge 5,$ and $5$ is the least divisor of $j$, with the initial value $d^-(v_5)= 5= f_5$.

\textit{Case 4:} If in the $v_j$-th position we have a prime number. The first such prime number is $7$. and $f_7 = 13$. Also we know that any prime number is of the form $4m \pm 1$ or $4m \pm 3$. 

\textit{Subcase 4.1:} When  $j = 4m \pm 1$. In this case $d^-(v_j) =  d^-(v_{4m} ) \pm 1$ and if  $j = 4m \pm 3$, then case $d^-(v_j) =  d^-(v_{4m} ) \pm 3$.

\ni This completes the proof.  
\end{proof}

\begin{rem}{\rm 
It is interesting to note that for $j \ge 7$ and $j=p_1$, a prime number and the next immediate prime greater than $P_1$ be $p_2$, then  then $d^-(v_{p_2}) = p_2-p_1$.		
}\end{rem}

The generalisation of the Fibonacci numbers is given by the Horadam sequence defined by 

\begin{eqnarray*}
H_0 & = & p\in\N_0,\\
H_1 & = & q\in\N_0,\\
H_n & = & rH_{n-1}+sH_{n-2}.
\end{eqnarray*}  where $r,s \in \N_0$.

\vspace{0.25cm}

In view of the above mentioned generalisation of Fibonacci sequence, we strongly believe that the following conjecture hold.

\begin{conj}\label{Conj-1}
For the Horadam Jaco-type graph, $J_\infty(\{H_n\})$, the in-degree subsequences for vertices $v_i$, for sufficiently large $i$ are of the form $\{\ldots,H_k-1$, $H_k$, $H_k$, $H_k+1$, $H_k+2$, $H_k+3$,\ldots, $H_k +(H_{k+1}-2),\ldots\}$, $k=4,5,6,\ldots$
\end{conj}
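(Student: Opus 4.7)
The plan is to reduce the determination of $d^-(v_j)$ to the behaviour of the auxiliary map $\psi(j):=\min\{i\in\N:H_i+i\ge j\}$. By Definition \ref{Defn-1.1}, $(v_i,v_j)$ is an arc of $J_\infty(\{H_n\})$ precisely when $H_i+i\ge j$, so the in-neighbours of $v_j$ are exactly the vertices $v_i$ with $\psi(j)\le i<j$. Hence $d^-(v_j)=j-\psi(j)$, and the conjecture becomes a statement about the block structure of this integer-valued function.

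First I would verify that the Horadam sequence is eventually strictly increasing so that $i\mapsto H_i+i$ is strictly increasing on the relevant tail. For non-degenerate parameter choices (say $r\ge 1$, $s\ge 1$ with initial data $(p,q)$ avoiding trivial collapse), a short induction yields $H_n>H_{n-1}$ for every $n\ge n_0(p,q,r,s)$. From that index onwards the hypothesis of Theorem \ref{Thm-2.1} is met, guaranteeing that every positive integer appears as the in-degree of some vertex $v_i$.

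The core combinatorial step is the block analysis. For $k\ge n_0$, strict monotonicity of $i\mapsto H_i+i$ identifies the level set
\[
\{\,j\in\N:\psi(j)=k\,\}\ =\ I_k\ :=\ [\,H_{k-1}+k,\ H_k+k\,],
\]
so that on $I_k$ the in-degree $d^-(v_j)=j-k$ traces out the consecutive integers $H_{k-1},\,H_{k-1}+1,\,\ldots,\,H_k-1,\,H_k$. Inspecting the endpoints shows that the last entry on $I_{k-1}$ and the first entry on $I_k$ both equal $H_{k-1}$: this is precisely the combinatorial mechanism producing the characteristic repeated pair in the conjectured pattern. Concatenating the blocks $I_{n_0},\,I_{n_0+1},\,\ldots$ then yields the full subsequence
\[
\ldots,\ H_k-1,\ H_k,\ H_k,\ H_k+1,\ H_k+2,\ \ldots,\ H_{k+1}-1,\ H_{k+1},\ H_{k+1},\ \ldots,
\]
as required.

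The main obstacle, I expect, is not the block argument itself but the small-index and degeneracy analysis. Certain parameter choices (for instance $r=0$ with $s=1$, or $p=q=0$, or $s=0$) yield sequences that are eventually periodic, eventually constant, or otherwise fail strict monotonicity, and for these the block description breaks down entirely. A rigorous treatment therefore needs an explicit hypothesis on $(p,q,r,s)$ excluding the degenerate regime, a computable threshold $n_0$, and a verification that the stepwise-increasing condition of Theorem \ref{Thm-2.1} genuinely holds from $n_0$ onwards; only then does the block description apply uniformly in $k$ and recover the conjectured pattern for all sufficiently large $i$.
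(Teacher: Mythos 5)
The paper offers no proof of Conjecture \ref{Conj-1}: it is stated as a conjecture, supported only by the numerical evidence of Table \ref{Tab-1} in the Fibonacci special case, and the conclusion explicitly lists the characterisation of the Horadam Jaco-type graph as an open problem. Your level-set argument is therefore not a variant of anything in the paper; in substance it is a correct resolution of the (suitably corrected) conjecture in the non-degenerate case, and it is cleaner than the divisibility case analysis the paper uses even for Theorem \ref{Thm-2.2}. The identity $d^-(v_j)=j-\psi(j)$ with $\psi(j)=\min\{i\in\N: H_i+i\ge j\}$ is exactly right once $i\mapsto H_i+i$ is increasing (which Definition \ref{Defn-1.1} already forces for large $i$ by requiring the sequence to be non-decreasing), the blocks $I_k=[H_{k-1}+k,\,H_k+k]$ tile the tail of $\N$, and the doubled value $H_{k-1}$ at the junction of $I_{k-1}$ and $I_k$ is precisely the repeated pair in the conjectured pattern; checking Table \ref{Tab-1} against $I_6=[11,14]$, $I_7=[15,20]$, $I_8=[21,29]$ confirms the mechanism. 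Two remarks. First, the appeal to Theorem \ref{Thm-2.1} is unnecessary, since the block computation already exhibits every in-degree explicitly. Second, your analysis shows that the conjecture as literally written contains an error: the run following the repeated $H_k$ terminates at $H_{k+1}-2$ (equivalently $H_k+(H_{k+1}-H_k-2)$, which in the Fibonacci case is $f_k+(f_{k-1}-2)$), not at $H_k+(H_{k+1}-2)$; for $k=6$ in Table \ref{Tab-1} the last such term is $11=f_7-2$, whereas $f_6+(f_7-2)=19$. So what you prove is the corrected statement, and you are right that a complete write-up must pin down hypotheses on $(p,q,r,s)$ guaranteeing eventual strict increase, since an equality $H_{k-1}=H_k$ collapses a block and produces a triple rather than a double repetition.
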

 
\subsection{Modular Jaco-type Graph}

It is well known that for the set $\N_0$ of all non-negative integers and $n,k\in \N,\ k\ge 2$, modular arithmetic allows an integer mapping in respect of modulo $k$ as follows. 

\vspace{-0.5cm}

\begin{alignat*}{3}
0 &\mapsto & 0 & =\ m_0\\
1 & \mapsto & 1 & =\ m_1 \\
2 & \mapsto & 2 & =\ m_2 \\
\ldots & & \ldots &  \ldots\\
k-1 & \mapsto & ~~ k-1 & =\ m_{k-1}\\ 
k & \mapsto & 0 & =\  m_{k}\\
k+1 & \mapsto & 1 & =\ m_{k+1}\\
\ldots & & \ldots &  \ldots\\
\end{alignat*}

\vspace{-0.5cm}

The new family of Jaco-type graphs, also called the \textit{modular Jaco-type graphs}, resulting from mod $k, \ k \in \N$ requires a relaxation of Definition \ref{Defn-1.1} to allow a stepwise non-negative, non-decreasing sequence.

Let $s_2 =\{a_n\}$, $ a_n \equiv n($mod $k)= m_n$. Consider the infinite \textit{root}-graph $J_\infty(s_2)$ and define $d^+(v_i) = m_i$, for $i = 1,2,3,\ldots$. From the aforesaid definition it follows that the case $k=1$ will result in a null (edgeless) Jaco-type graph for all $n\in\N$. For $k=2$ and $n$ is even, the Jaco-type graph is the union of $\frac{n}{2}$ copies of directed $P_2$. For $k=3$, the Jaco-type graph is a directed tree and hence is an acyclic graph $G$. 

For illustration, if $k = 5$, then Figure \ref{fig: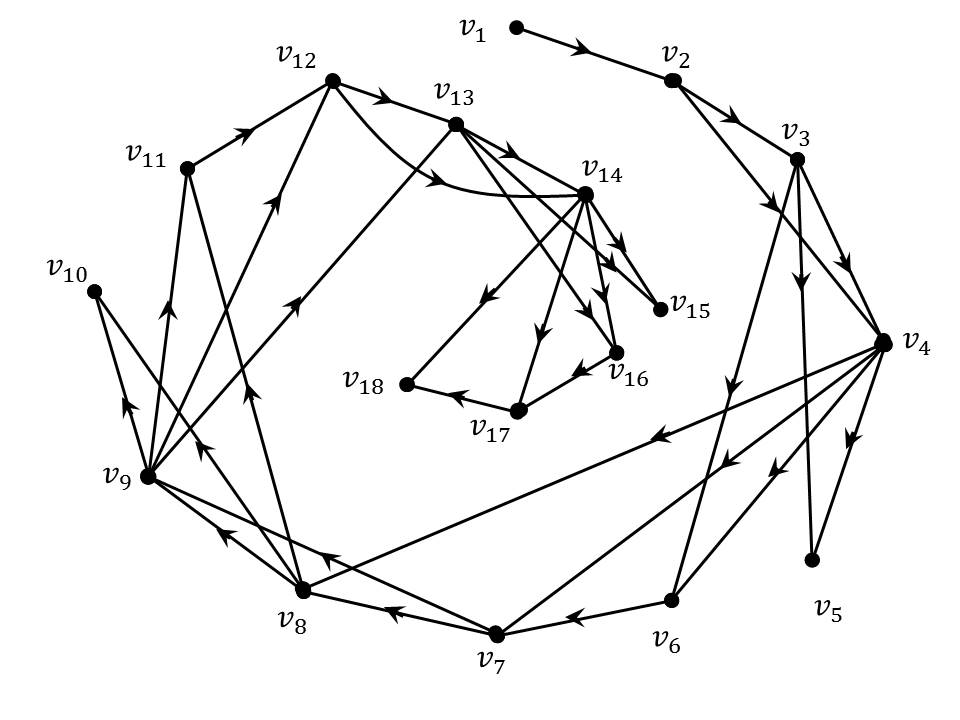} depicts $J_{18}(s_2)$.

\begin{figure}[h!]
\centering
\includegraphics[width=0.7\linewidth]{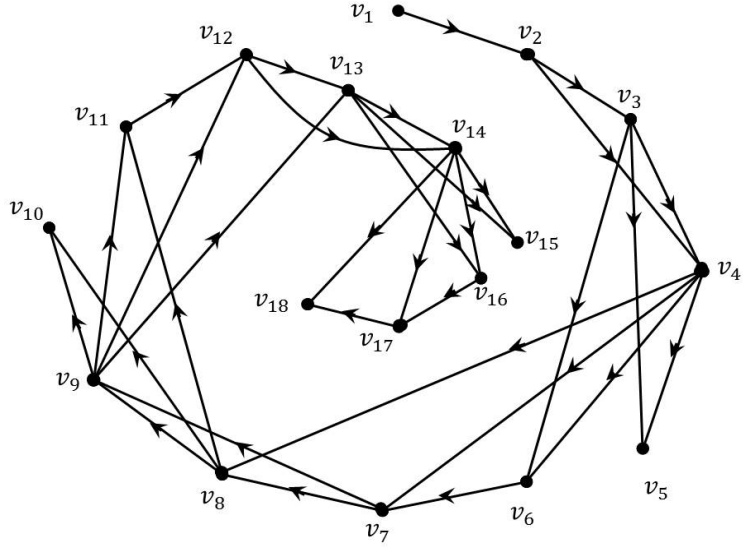}
\caption{$J_{18}(s_2)$. }
\label{fig:Fig-2.jpg}
\end{figure}

Table \ref{Tab-2} depicts the manually calculated invariant, $d^-(v_i)$, $1\le i \le 30$ for $k=8$ together with the suggested pattern for all even $k\ge 2$, $i \ge 1$ which requires proof to settle the determination of the corresponding in-degrees, $d^-(v_i)$, $ i = 1,2,3,\ldots$. 

\begin{longtable}{|l|l|l|c|l|l|l|}
\hline
$\phi (v_i)\rightarrow i\in{\N}$ & $d^-(v_i)$ & $d^-(v_i)=$ & & $\phi (v_i)\rightarrow i\in{\N}$ & $d^-(v_i)$ & $d^-(v_i)=$\\
\hline
1 & 0 & - & & 16 & 4 & $\frac{k}{2}$\\
\hline
2 & 1 & 1 & & 17 & 3 & $\frac{k}{2}-1$\\
\hline
3 & 1 & 1 & & 18 & 4 & $\frac{k}{2}$\\
\hline
4 & 2 & 2 & & 19 & 3 & $\frac{k}{2}-1$\\
\hline
5 & 2 & 2 & & 20 & 4 & $\frac{k}{2}$\\
\hline
6 & 3 & 3 & &  21 & 3 & $\frac{k}{2}-1$\\
\hline
7 & 3 & 3 & & 22 & 4 & $\frac{k}{2}$\\
\hline
8* & 4 & $\frac{k}{2}$ & & 23 & 3 & $\frac{k}{2}-1$\\
\hline
9 & 3 & $\frac{k}{2}-1$ & & 24 & 4 & $\frac{k}{2}$\\
\hline
10 & 4 & $\frac{k}{2}$ & & 25 & 3 & $\frac{k}{2}-1$\\
\hline
11 & 3 & $\frac{k}{2}-1$ & & 26 & 4 & $\frac{k}{2}$\\
\hline
12 & 4 & $\frac{k}{2}$ & & 27 & 3 & $\frac{k}{2}-1$\\
\hline
13 & 3 & $\frac{k}{2}-1$ & & 28 & 4 & $\frac{k}{2}$\\
\hline
14 & 4 & $\frac{k}{2}$ & & 29 & 3 & $\frac{k}{2}-1$\\
\hline
15 & 3 & $\frac{k}{2}-1$ & & 30 & 4 & $\frac{k}{2}$\\
\hline
\caption{$k= 8$}\label{Tab-2}
\end{longtable}

\ni We note that for $i\ge 1$ and $k$ is even, the in-degree sequence seems to have the form $\{\underbrace{0,1,1,2,2,3,3,\ldots,\frac{k}{2}-1,\frac{k}{2}-1,\underbrace{\frac{k}{2}}_{1~entry}}_{first~k~in-degrees},~\underbrace{\frac{k}{2}-1,\frac{k}{2},\frac{k}{2}-1,\frac{k}{2},\ldots, \frac{k}{2}-1,\frac{k}{2}}_{repetitive~subsequence,~k~in-degrees},~\ldots\}$.

Table \ref{Tab-3} depicts the manually calculated invariant, $d^-(v_i),\ 1\le i \le 30$ for $k=9$ together with the suggested pattern for all odd $k\ge 1$, $i \ge 1$ which requires proof to settle the determination of the corresponding in-degrees, $d^-(v_i)$, $ i = 1,2,3,\ldots$.

\begin{longtable}{|l|l|l|c|l|l|l|}
\hline
$\phi (v_i)\rightarrow i\in{\N}$ & $d^-(v_i)$ & $d^-(v_i)=$ & & $\phi (v_i)\rightarrow i\in{\N}$ & $d^-(v_i)$ & $d^-(v_i)=$\\
\hline
1 & 0 & - & & 16 & 4 & $\lfloor\frac{k}{2}\rfloor$\\
\hline
2 & 1 & 1 & & 17 & 4 & $\lfloor\frac{k}{2}\rfloor$\\
\hline
3 & 1 & 1 & & 18 & 4 & $\lfloor\frac{k}{2}\rfloor$\\
\hline
4 & 2 & 2 & & 19 & 4 & $\lfloor\frac{k}{2}\rfloor$\\
\hline
5 & 2 & 2 & & 20 & 4 & $\lfloor\frac{k}{2}\rfloor$\\
\hline
6 & 3 & 3 & & 21 & 4 & $\lfloor\frac{k}{2}\rfloor$\\
\hline
7 & 3 & 3 & & 22 & 4 & $\lfloor\frac{k}{2}\rfloor$\\
\hline
8 & 4 & $\lfloor\frac{k}{2}\rfloor$ & & 23 & 4 & $\lfloor\frac{k}{2}\rfloor$\\
\hline
9* & 4 & $\lfloor\frac{k}{2}\rfloor$ & & 24 & 4 & $\lfloor\frac{k}{2}\rfloor$\\
\hline
10 & 4 & $\lfloor\frac{k}{2}\rfloor$ & & 25 & 4 & $\lfloor\frac{k}{2}\rfloor$\\
\hline
11 & 4 & $\lfloor\frac{k}{2}\rfloor$ & & 26 & 4 & $\lfloor\frac{k}{2}\rfloor$\\
\hline
12 & 4 & $\lfloor\frac{k}{2}\rfloor$ & & 27 & 4 & $\lfloor\frac{k}{2}\rfloor$\\
\hline
13 & 4 & $\lfloor\frac{k}{2}\rfloor$ & & 28 & 4 & $\lfloor\frac{k}{2}\rfloor$\\
\hline
14 & 4 & $\lfloor\frac{k}{2}\rfloor$ & & 29 & 4 & $\lfloor\frac{k}{2}\rfloor$\\
\hline
15 & 4 & $\lfloor\frac{k}{2}\rfloor$ & & 30 & 4 & $\lfloor\frac{k}{2}\rfloor$\\
\hline
\caption{$k= 9$.}\label{Tab-3}
\end{longtable}

We observe that for $i\ge 1$ and $k$ is odd, the sequence of in-degrees seems to have the form $\{\underbrace{0,1,1,2,2,3,3,\ldots,\lfloor\frac{k}{2}\rfloor,\lfloor\frac{k}{2}\rfloor}_{first~k~in-degrees},~\underbrace{\lfloor\frac{k}{2}\rfloor,\lfloor\frac{k}{2}\rfloor,\lfloor\frac{k}{2}\rfloor,\ldots}_{all~in-degrees}\}$.

\begin{thm}\label{Thm-2.3}
Consider the infinite modular Jaco-type graph $J_\infty(s_2)$, modulo $k\ge 1$. If $k$ is even, then the sequence of in-degrees for vertices $v_i,\, i\ge 1$ are of the form 
$\{\underbrace{0,1,1,2,2,3,3,\ldots,\frac{k}{2}-1,\frac{k}{2}-1,\underbrace{\frac{k}{2}}_{1~entry}}_{first~k~in-degrees},~\underbrace{\frac{k}{2}-1,\frac{k}{2},\frac{k}{2}-1,\frac{k}{2},\ldots, \frac{k}{2}-1,\frac{k}{2}}_{repetitive~subsequence,~k~in-degrees},~\ldots\}$ and if $k$ is odd. then the sequence of in-degrees for vertices $v_i,\, i\ge 1$ are of the form
$\{\underbrace{0,1,1,2,2,3,3,\ldots,\lfloor\frac{k}{2}\rfloor,\lfloor\frac{k}{2}\rfloor}_{first~k~in-degrees},~\underbrace{\lfloor\frac{k}{2}\rfloor,\lfloor\frac{k}{2}\rfloor,\lfloor\frac{k}{2}\rfloor,\ldots}_{all~in-degrees}\}$.
\end{thm}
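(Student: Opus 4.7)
The plan is a direct counting argument for $d^-(v_j)$. Write each $i$ uniquely as $i = pk + t$ with $0 \le t \le k-1$ and $p \ge 0$, so that $m_i = t$ and $i + m_i = pk + 2t$; likewise write $j = Qk + S$ with $0 \le S \le k-1$ and $Q \ge 0$. Under the (relaxed) arc condition $m_i + i \ge j$ together with $i < j$, counting the arcs entering $v_j$ reduces to counting integer pairs $(p,t)$ satisfying
\[
pk + 2t \;\ge\; Qk + S \quad\text{and}\quad pk + t \;<\; Qk + S.
\]

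Since $2t \le 2(k-1) < 2k$, the first inequality forces $(Q-p)k \le 2(k-1) - S$, so $p \ge Q-1$; combined with $p \le Q$ (forced by $i < j$), this gives $p \in \{Q-1, Q\}$ whenever $Q \ge 1$. I would then split the count. For $p = Q$ the two inequalities reduce to $\lceil S/2 \rceil \le t \le S-1$, contributing $\lfloor S/2 \rfloor$ vertices. For $p = Q-1$ (only possible when $Q \ge 1$) they reduce to $\lceil (k+S)/2 \rceil \le t \le k-1$, contributing $\max\!\bigl(k - \lceil (k+S)/2 \rceil,\, 0\bigr)$ vertices. The initial segment $Q = 0$ (i.e.\ $j \le k$) is handled by the $p = Q$ count alone and yields $\lfloor j/2 \rfloor$, immediately producing the common prefix $0, 1, 1, 2, 2, \ldots, \lfloor k/2 \rfloor$ that matches both claimed forms through $j = k$.

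The remaining work is to simplify the closed form
\[
d^-(v_j) \;=\; \lfloor S/2 \rfloor \;+\; \max\!\bigl(k - \lceil (k+S)/2 \rceil,\, 0\bigr) \qquad (Q \ge 1)
\]
by splitting on the parities of $k$ and $S$. For $k$ even the expression evaluates to $k/2$ when $S$ is even and to $k/2 - 1$ when $S$ is odd; since $Qk$ is even, the parity of $S$ matches the parity of $j$, so this is exactly the alternating tail $k/2 - 1,\, k/2,\, k/2 - 1,\, k/2, \ldots$ claimed for $j \ge k+1$, joined to the initial block by the single boundary value $d^-(v_k) = k/2$. For $k$ odd both parity subcases collapse to $\lfloor k/2 \rfloor$, yielding the constant tail stated in the theorem.

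The main obstacle is the boundary case $S = k-1$ with $k$ even, where the $p = Q-1$ contribution degenerates to zero because $\lceil (k + (k-1))/2 \rceil = k$. This vanishing is precisely what makes the periodic tail begin with $k/2 - 1$ at $j = k+1$ rather than with $k/2$; an off-by-one in the ceiling calculation would misclassify the transition between the initial segment and the tail. A careful ceiling-floor bookkeeping organised by the four parity subcases $(k \bmod 2,\, S \bmod 2)$ resolves this cleanly, and is the only genuinely delicate part of the argument.
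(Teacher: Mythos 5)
Your argument is correct, but it reaches the result by a genuinely different route from the paper's. You compute $d^-(v_j)$ directly: writing $i=pk+t$ and $j=Qk+S$, you note that the two constraints $i+m_i\ge j$ and $i<j$ force $p\in\{Q-1,Q\}$, count the admissible residues $t$ in each case, and reduce the theorem to a parity analysis of the closed form $\lfloor S/2\rfloor+\max\bigl(k-\lceil (k+S)/2\rceil,\,0\bigr)$; I have checked the four parity subcases and the $S=k-1$ boundary you flag, and they all come out as you claim. The paper instead partitions the vertices into consecutive blocks of size $k$, asserts that the induced subgraphs on these blocks are pairwise isomorphic, and runs what it calls a \emph{looped mathematical induction} on the modulus $k$ itself (base cases $k=2,4$, step $\ell\to\ell+2$), tracking which out-arcs of the first block terminate in the second; the odd case is dismissed as ``similar reasoning.'' Your approach buys an explicit formula valid for every $j$, from which Corollary \ref{Cor-2.4} and the location of the transition at $j=k+1$ fall out immediately, and it is considerably more self-contained than the paper's induction, which leaves the block-isomorphism claim, the stabilisation of in-degrees, and the entire odd case to the reader. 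One cosmetic slip: $Q=0$ corresponds to $j\le k-1$, not $j\le k$; for $j=k$ one has $Q=1$, $S=0$, and the value $\lfloor k/2\rfloor$ comes from the $p=Q-1$ term rather than the $p=Q$ term, though the resulting in-degree is the same, so nothing breaks.
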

\begin{proof}
Partition $\N$ into subsets $\mathcal{C}_m =\{j: (m-1)k +1 \le j \le mk$, $k \in \N\}$, $m=1,2,3,\ldots$ Also partition the vertex set $V(J_\infty(s_2))$ into subsets $\mathcal{V}_m =\{v_j: j \in \mathcal{C}_m\}$. Clearly, the induced subgraphs $\langle \mathcal{V}_r\rangle$ and $\langle\mathcal{V}_q\rangle$ are isomorphic.

\vspace{0.2cm}

\textit{Case 1:} Let $k\ge 2$ and even. First consider $\langle \mathcal{V}_1\rangle$. For $k=2$ the sequence of in-degrees is $\{0,\frac{k}{2}=\frac{2}{2}=1\} = \{0,1\}$. For $k=4$ the sequence of in-degrees is $\{0, 1,1,\frac{k}{2}=\frac{4}{2}=2\} = \{0,1,1,2\}$. Hence, the result holds for $k=2,4$. 

\vspace{0.2cm}

Assume that it holds for $ k = \ell$, $\ell$ is even. Hence, the corresponding induced subgraph $\langle \mathcal{V}_1\rangle$ has the in-degree sequence $\{0,1,1,2,2,3,3,\ldots,\frac{\ell}{2}-1,\frac{\ell}{2}-1, \underbrace{\frac{\ell}{2}}_{1~entry}\}$. All the out-arcs defined for vertices $v_1,v_2,v_3,\ldots,v_{\frac{k}{2}}$ have heads within $\langle \mathcal{V}_1\rangle$. However, vertices $v_i, \ \frac{\ell}{2}+1 \le i \le \frac{\ell}{2}+ (\frac{\ell}{2}-1)$ requires $2i$ out-arcs in a sufficiently large modular Jaco-type graph. Hence, by adding the required out-arcs by utilising $\langle \mathcal{V}_1\rangle $ and $\langle \mathcal{V}_2\rangle$ to construct $J_{2\ell}(s_2)$, the corresponding sequence of in-degrees is, $\{0,1,1,2,2,3,3,\ldots,\frac{\ell}{2}-1,\frac{\ell}{2}-1, \underbrace{\frac{\ell}{2}}_{1~entry},\underbrace{\frac{\ell}{2}-1, \frac{\ell}{2},\frac{\ell}{2}-1, \frac{\ell}{2},\ldots,\frac{\ell}{2}-1, \frac{\ell}{2}}_{\ell~in-degrees}\}$. Since the in-degree of any vertex $v_i$ in Jaco-type graph of any finite size or infinite, remains constant, the result follows for the in-degree of verices $v_{\ell+1}$, $v_{\ell+2}$, $v_{\ell+3},\ldots$. Hence, the result holds for $J_\infty(s_2)$, and $k=\ell$. Since the same reasoning applies for $k=\ell+2$ mathematical induction immediately implies that the general result follows for $J_\infty(s_2)$, $\forall$ even $k\in \N$.\\

\textit{Case 2:} Let $k\ge 1$ and odd. The proof follows through similar reasoning to that of Case 1.
\end{proof}

Note that the technique used in the proof of Theorem \ref{Thm-2.3} is called \textit{looped mathematical induction}. 

For a given $k$ the in-degree for a vertex $v_i$ in both $J_\infty(s_2)$ and the finite $J_n(s_2)$ remains equal and hence the next corollary is immediate consequence of Theorem \ref{Thm-2.3}.

\begin{cor}\label{Cor-2.4}
For a modular Jaco-type graph, mod $k\ge 1$ we have
\begin{enumerate}\itemsep0mm
	\item[(i)] If $k$ is even and $i\ge k$ then $d^-(v_i) =
	\begin{cases}
	\frac{k}{2}-1;  & i = 1 (\rm{mod}\ k),\\
	\frac{k}{2}; & \text{Otherwise}. 
	\end{cases}$
	\item[(ii)] If $k$ is odd and $i\ge k-1$ then, $d^-(v_i) = \lfloor\frac{k}{2}\rfloor$.
\end{enumerate}
\end{cor}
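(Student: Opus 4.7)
My plan is to deduce Corollary 2.4 directly by reading off the appropriate entries of the explicit in-degree sequence already furnished by Theorem \ref{Thm-2.3}. The entire content of the corollary is a translation of the eventually periodic (for even $k$) or eventually constant (for odd $k$) tail described in Theorem \ref{Thm-2.3} into a formula indexed by a residue condition on $i$. No new graph-theoretic reasoning is required; the task is really just bookkeeping with indices.

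For case (i), $k$ even, I would start from the sequence supplied by Theorem \ref{Thm-2.3}, namely the "ramp" $0,1,1,2,2,\ldots,\tfrac{k}{2}-1,\tfrac{k}{2}-1,\tfrac{k}{2}$ occupying positions $1,2,\ldots,k$, followed by the period-two block $\tfrac{k}{2}-1,\tfrac{k}{2},\tfrac{k}{2}-1,\tfrac{k}{2},\ldots$. Aligning indices with values gives $d^-(v_k)=\tfrac{k}{2}$, $d^-(v_{k+1})=\tfrac{k}{2}-1$, $d^-(v_{k+2})=\tfrac{k}{2}$, and so on. I would then simply check which residue class of $i$ picks out the value $\tfrac{k}{2}-1$ and verify that the complementary residues return $\tfrac{k}{2}$.

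For case (ii), $k$ odd, the proof is even shorter. Theorem \ref{Thm-2.3} states that the first $k$ in-degrees are $0,1,1,2,2,\ldots,\lfloor\tfrac{k}{2}\rfloor,\lfloor\tfrac{k}{2}\rfloor$, with the second occurrence of $\lfloor\tfrac{k}{2}\rfloor$ landing at position $2\lfloor\tfrac{k}{2}\rfloor=k-1$, and every subsequent in-degree also equals $\lfloor\tfrac{k}{2}\rfloor$. Hence for every $i\ge k-1$ we immediately get $d^-(v_i)=\lfloor\tfrac{k}{2}\rfloor$.

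The only mildly delicate step, and what I expect to be the main obstacle if one is careless, is the index bookkeeping: verifying exactly where the initial ramp terminates (at position $k$ in both parities) and correctly matching the period of the eventually periodic tail in the even case to the residue condition in the corollary statement. Once these index alignments are pinned down, the corollary follows with no further work.
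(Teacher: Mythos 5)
Your overall strategy coincides with the paper's: the authors give no independent argument for Corollary \ref{Cor-2.4}, merely noting that it is an immediate consequence of Theorem \ref{Thm-2.3}, so reading the formulas off the explicit in-degree sequence is exactly the intended route, and your treatment of part (ii) goes through as described (for odd $k$ the ramp places the value $\lfloor\frac{k}{2}\rfloor$ at positions $k-1$ and $k$, and the constant tail does the rest).

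The genuine problem sits precisely in the step you defer as ``bookkeeping'' in part (i). By Theorem \ref{Thm-2.3} the tail beyond position $k$ is the period-two alternation $\frac{k}{2}-1,\frac{k}{2},\frac{k}{2}-1,\frac{k}{2},\ldots$ beginning at position $k+1$; since $k$ is even, this assigns $\frac{k}{2}-1$ to \emph{every odd} $i>k$, i.e.\ to the residue class $i\equiv 1\pmod 2$, not to the class $i\equiv 1\pmod k$ appearing in the corollary. Concretely, for $k=8$ Table \ref{Tab-2} records $d^-(v_{11})=3=\frac{k}{2}-1$, yet $11\not\equiv 1\pmod 8$, so the displayed case split would return $\frac{k}{2}=4$. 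The verification you promise --- ``check which residue class of $i$ picks out $\frac{k}{2}-1$ and verify that the complementary residues return $\frac{k}{2}$'' --- therefore cannot be completed for the statement as printed: the correct condition is that $i$ be odd (equivalently $i\equiv 1\pmod 2$), and the modulus $k$ in the corollary is evidently a slip. Your proposal asserts that this check will succeed when it will not; you must either prove the corrected statement or explicitly flag the inconsistency between Corollary \ref{Cor-2.4}(i) and Theorem \ref{Thm-2.3}.
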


\vspace{0.2cm}

In the study of Jaco-type graphs, the concepts of the prime Jaconian vertex denoted, $v_p$ and the Jaconian set are of importance. For ease of reference the adapted definitions from \cite{KSS1} are repeated here.

\begin{defn}{\rm \cite{KSS1}
The set of vertices attaining degree $\Delta (J_n(s_2))$ is called the set of Jaconian vertices; the Jaconian vertices or the Jaconian set of the Jaco-type graph $J_n(s_2)$, and denoted, $\J(J_n(s_2))$ or, $\{J_n(s_2)\}$ for brevity.
}\end{defn}

\begin{defn}{\rm \cite{KSS1}
The lowest numbered (subscripted) Jaconian vertex is called the prime Jaconian vertex of a Jaco-type graph and denoted, $v_p$.
}\end{defn}

For $k\ge 3$, the modular Jaco-type graph is connected. For connected modular Jaco-type graphs we have the next result.

\begin{prop}\label{Prop-2.5}
For the infinite modular Jaco-type graph $J_\infty(s_2),\, k\ge 3$, we have 
\begin{equation*} 
\J(J_\infty(s_2)) =
\begin{cases}
\{v_{k-1},v_{2k-2},v_{2k-1},v_{3k-2},v_{3k-1},\ldots\},  & \text {if $k$ even,}\\
\{v_{k-1},v_{2k-1},v_{3k-1},\ldots\},  & \text {if $k$ odd.}
\end{cases}
\end{equation*}
\end{prop}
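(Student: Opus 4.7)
The plan is to combine Theorem~\ref{Thm-2.3}, which gives the explicit sequence of in-degrees, with the defining identity $d^+(v_i) = i \bmod k$ for the out-degrees, and then read off the total degree $d(v_i) = d^-(v_i) + d^+(v_i)$ as an explicit function of $i$ (more precisely, of $i \bmod k$ together with the parity of $i$ when $k$ is even). Identifying the global maximum then reduces to a direct comparison within one period of length $k$.

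For the even case, I would write any $i \ge k$ as $i = mk + j$ with $m \ge 1$ and $0 \le j \le k-1$. Since $k$ is even, the parity of $i$ coincides with that of $j$, and Theorem~\ref{Thm-2.3} yields $d^-(v_i) = k/2$ when $j$ is even and $d^-(v_i) = k/2-1$ when $j$ is odd. Combined with $d^+(v_i) = j$, the total degree equals $k/2 + j$ in the even case and $k/2 - 1 + j$ in the odd case. Scanning $j = 0, 1, \ldots, k-1$, both expressions peak at $3k/2 - 2$, attained exactly at $j = k-2$ and $j = k-1$; hence within each block with $m \ge 1$ the Jaconian vertices are precisely $v_{(m+1)k-2}$ and $v_{(m+1)k-1}$. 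For the initial block, Theorem~\ref{Thm-2.3} forces $d^-(v_{k-2}) = k/2-1$ rather than $k/2$, so $v_{k-2}$ has total degree only $3k/2 - 3$ and is not Jaconian, while $v_{k-1}$ still attains $3k/2 - 2$ and is Jaconian. Reindexing then gives the claimed set $\{v_{k-1}, v_{2k-2}, v_{2k-1}, v_{3k-2}, v_{3k-1}, \ldots\}$.

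For the odd case the argument is cleaner. Theorem~\ref{Thm-2.3} gives $d^-(v_i) = \lfloor k/2 \rfloor$ for all $i \ge k-1$, so $d(v_i) = \lfloor k/2 \rfloor + (i \bmod k)$ for such $i$, and this is maximised precisely when $i \bmod k = k-1$, i.e.\ when $i \in \{k-1, 2k-1, 3k-1, \ldots\}$, yielding the value $3(k-1)/2$. A direct calculation on the initial segment $i \le k-2$ shows $d(v_i) \le \lfloor (k-2)/2 \rfloor + (k-2) < 3(k-1)/2$, so no such vertex is Jaconian.

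The main obstacle is the boundary behaviour in the first block $\mathcal{V}_1$, where the in-degree sequence follows the ``building up'' pattern $0,1,1,2,2,\ldots$ instead of the periodic pattern valid for $i > k$. For the even case one must separately verify that $v_{k-1}$ still attains the maximum while its partner $v_{k-2}$ falls short; this is a short numerical check but is the only place where the proof cannot proceed purely by periodicity from Theorem~\ref{Thm-2.3}.
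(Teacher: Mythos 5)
Your proposal is correct, and it is built from the same two ingredients as the paper's proof: the in-degree sequences of Theorem~\ref{Thm-2.3} and the defining out-degrees $d^+(v_i)= i \bmod k$. The difference is that you actually carry out the maximisation of the total degree $d^-(v_i)+d^+(v_i)$, whereas the paper only observes that the maximum \emph{out}-degree $k-1$ occurs at the vertices $v_{tk-1}$ and then asserts that the proposition ``follows directly'' from Theorem~\ref{Thm-2.3}. Taken literally, the paper's argument accounts only for $\{v_{k-1},v_{2k-1},v_{3k-1},\ldots\}$ and does not explain why $v_{2k-2},v_{3k-2},\ldots$ also belong to the Jaconian set when $k$ is even; those vertices have out-degree $k-2$, not $k-1$, and are Jaconian only because their in-degree is $\tfrac{k}{2}$ rather than $\tfrac{k}{2}-1$. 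Your periodwise computation --- showing that for even $k$ both residues $j=k-2$ and $j=k-1$ give total degree $\tfrac{3k}{2}-2$ --- supplies exactly this missing case, and your separate check that $v_{k-2}$ in the first block reaches only $\tfrac{3k}{2}-3$ justifies the asymmetry at the start of the stated set, a boundary issue the paper's proof does not address at all. So your route is the same in spirit but is the more complete of the two; the only thing the paper's terser argument buys is brevity.
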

\begin{proof}
Note that $\Delta(J_\infty(s_2))$ is the maximum degree attained by some vertices. Hence, $\Delta(J_\infty(s_2)) = \max\{d^+(v_i) + d^-(v_i)\}$ over all $i\in\N$. Since the $\max\{\ell\}$ (mod $k$) is defined for $\ell=k-1$, the maximum out-degrees are obtained for  vertices subscripted with $t\cdot k-1,\ t=1,2,3,\ldots$. The aforesaid implies that the results for both $k$ even or $k$ odd follow directly from Theorem \ref{Thm-2.3}.
\end{proof}

\section{Conclusion}

Jaco-type graphs present a wide scope for research in respect of the many known invariants applicable to graphs. It is noted that all Jaco-type graphs defined for non-negative, step-wise non-decreasing and step-wise increasing integer sequences $\{a_n\}$, are propagating graphs \cite{KSCM1}. Hence, a wide scope for further research exists with regards to black clouds, black arcs and black energy dissipation.

It was reported that the On-line Encyclopedia of Integer Sequences (OEIS) hosts about 2.6 lakhs of sequences. Amongst the sequences, it is likely that thousands of integer sequences exist for which Jaco-type graphs can be defined. Characterising the Horadam Jaco-type graph is also an open research topic.

\end{document}